\pgfplotsset{compat=1.13}
\newtheorem{theorem}{Theorem}
\newtheorem{proposition}{Proposition}[theorem]
\newtheorem{corollary}{Corollary}[theorem]
\newtheorem{remark}{Remark}
\newtheorem{definition}{Definition}
\title{On Hilbert's 16th Problem}
\author{Lars Andersen}
\begin{document}

\maketitle
\begin{abstract}
    We prove that to each real singularity $f: (\mathbb{R}^{n}, 0) \to (\mathbb{R}^k, 0)$ with $k\geq 2$ one can associate systems of differential equations $\mathfrak{g}^{k}_f$ which are pushforwards in the category of $\mathcal{D}$-modules over $\mathbb{R}^{k}$ of the sheaf of real analytic functions on the total space of the Milnor fibration. We then use this to study Hilbert's 16th problem on polynomial dynamical systems in the plane.
\end{abstract}
\begin{center}
    \text{Classification: }\textbf{14-XX, 94-XX}
\end{center}

\section{Introduction}

Since Isaac Newton's \cite{newton} unification of analysis, geometry and physics these subjects have tangented yet never united. A brief glimpse of their interplay resurfaced in Henri Poincarés invention of bifurcation theory where one studies the topological properties of the solutions of differential equations without having to find exact formulas for the solutions. This was further developed by the french and russian schools of geometers, in particular by Réné Thom and Vladimir Arnold and their students. On the geometric side was simultaneously Brieskorn and Milnor's work and in particular Milnor's study of the local topology of singularities. On the analytic side was, precedingly, Y. Manins work on the Gauss-Manin system of differential equations.\\

In more detail. Milnor discovered in \cite{Milnor} smooth fibrations associated with isolated singularities $(\mathbb{C}^{n+1}, 0)\to (\mathbb{C}, 0)$.  The homology groups of the fibres of these Milnor fibrations can be computed as Manin proved by solving the Gauss-Manin system. This was proven in 1958 (\cite{Man58}).\\ 
In a previous article the author defined similar systems of differential equations for real analytic singularities $f: (\mathbb{R}^{n+1}, 0)\to (\mathbb{R}, 0)$. In this article we will consider singularities $(\mathbb{R}^{n}, 0)\to (\mathbb{R}^k, 0)$ and apply these results to Hilbert's 16th problem.

\section*{Acknowledments}
    The author wishes to thank his late mother for her everlasting love and light and for telling me that everything is going to be alright. This is dedicated to her memory, and be it true or false it might raise questions and perhaps inspire further research on the local nature of analysis and geometry and in particular between singularities of varieties and the topology of dynamical systems. He also likes to thank his sister and father for their support throughout.

\subsection{A Real Milnor Fibration}
Suppose given a real analytic map germ $f: (\mathbb{R}^{n}, 0)\to (\mathbb{R}^k, 0)$ such that any representative of $(V(f), 0)$ has an isolated singular point in the origin. Then (cf. \cite{Milnor}) there exists $\delta_0>0$ such that for any $\delta\in(0,\delta_0]$ there exists $\epsilon_0>0$ such that for any $\epsilon\in (0, \epsilon]$ the map
$$f: \mathcal{N}_{\epsilon(\delta)}=f^{-1}(\mathbb{D}_{\eta})\cap \mathbb{B}_{\delta}\to \mathbb{D}_{\epsilon}^{\ast}$$
composed with $(x_1,\dots, x_k)\mapsto x_1^2+\dots+x_k^2$ is a $C^{\infty}$-fibration called the open Milnor fibration of $f$. The fiber $f^{-1}(\mathbb{S}_{\eta})\cap\mathbb{B}_{\delta}$ over $\eta\in\mathbb{R}^{+}$ is denoted $\mathcal{F}_{\eta}$ and is called the Milnor fibre. By the Regular Value Theorem (see e.g. \cite{Kosinski}) it is a $C^{\infty}$-manifold. 
\subsection{Real Vanishing Cycles} 
Just as we have done in the case of singularities $(\mathbb{R}^n, 0)\to (\mathbb{R}, 0)$ one calls a set of generators of the homology groups of the Milnor fibres a set of vanishing cycles. In the case studied by us in our thesis one has positive and negative vanishing cycles. In our case the base of the fibration is simply connected and this phenomenon does not occur. 

\begin{definition} 
Let $n\in\mathbb{N}$. The real vanishing cycles of $f$ are the generators of $H^n(\mathcal{F}_{\eta}; \mathbb{R})$. 
\end{definition}

There is an action of $\mathbb{S}_{\eta}\subset \mathbb{D}_{\epsilon}^{\ast}$ on $H^n(\mathcal{F}_{\eta}; \mathbb{R})$
defined by lifting $t\mapsto \phi(t)$ via the fibration above.  

\subsection{Notation}
For any $N\in\mathbb{N}$ and for any real analytic submanifold $M\subset \mathbb{R}^{N}$ let $\mathcal{O}_{M}$ denote the sheaf of real analytic functions on $M$.\\
In particular if $(U,\mathcal{O}_U)$ is a complex analytic space with $U\supset M$ an open neighborhood of $M$ then $\mathcal{O}_M=\mathcal{O}_{U|M}$.\\ 
We will only work with real analytic manifolds but for a lucid exposition on real analytic geometry and especially coherent sheaves we refer to the book \cite{italian}.\\
Given a real analytic space $(X, \mathcal{O}_X)$ we say the $X$ is Stein if for any coherent $O_X$-module $\mathcal{C}$, 
$$H^k(X, \mathcal{C})=0,\qquad\forall k>0$$
where the homology is coherent sheaf cohomology.\\
Given real analytic spaces $(X,\mathcal{O}_X)$ and $(Y, \mathcal{O}_Y)$ and a morphism $f: X\to Y$ we will say that $f$ is \emph{Stein} if given a real analytic subspace $Z\subset Y$ which is Stein, then the pullback of $(Z, \mathcal{O}_Z)$ by $f$ is Stein.\\
By the word $\mathcal{D}$-module we will mean a module over the sheaf of differential operators with coefficients real analytic functions. 
Our main reference for $\mathcal{D}$-module theory is the excellent work \cite{Pham} and we follow its terminology for $\mathcal{D}$-modules. For instance $\int_f^{\ast}$ denotes higher direct images in the category of $\mathcal{D}$-modules and $DR_X(\mathcal{O}_X)$ denotes the de Rham complex
$$\Omega_X^n\leftarrow^d \Omega_X^{n-1}\leftarrow^d\dots \leftarrow^d \Omega_X^0=\Omega_X$$
with $d$ the standard exterior derivative on differential $k$-forms. 
\section{$\mathcal{D}$-modules over the real numbers}
\subsection{The Gauss-Manin Connection}
The following holds.

\begin{theorem}\label{main theorem} There exists a ring isomorphism 
$$\left(\int_{f}^k \mathcal{O}_{\mathcal{N}_{\epsilon(\delta)}}\right)_{\eta}\cong H^{k+n}(\mathcal{F}_{\eta}; \mathbb{R})\otimes_{\mathbb{R}} \mathcal{O}_{\mathbb{D}^{\ast}, \eta}$$
\end{theorem}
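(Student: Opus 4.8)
The plan is to compute the direct image through the relative de Rham complex and then to identify its stalk fiberwise using the local triviality of the Milnor fibration. First I would unwind the definition of $\int_f^k$ from \cite{Pham}: the direct image of $\mathcal{O}_{\mathcal{N}_{\epsilon(\delta)}}$ is represented by $Rf_{*}$ applied to the relative de Rham complex $DR_{\mathcal{N}_{\epsilon(\delta)}/\mathbb{D}^{*}}(\mathcal{O}_{\mathcal{N}_{\epsilon(\delta)}})$ (the transfer bimodule tensored against $\mathcal{O}$, resolved by relative differential forms), so that $\int_f^k \mathcal{O}_{\mathcal{N}_{\epsilon(\delta)}}$ is the $k$-th hypercohomology sheaf of this pushforward, with the degree shift dictated by the $\mathcal{D}$-module conventions of \cite{Pham}, which produces the total degree $k+n$ on the cohomology of the fiber. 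The role of the hypothesis that any representative of $(V(f),0)$ has an isolated singular point is to guarantee, via \cite{Milnor}, that over the punctured disk $f$ restricts to a genuine $C^{\infty}$-fibration, so that the relative forms behave like families of forms on a fixed manifold.

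The second step is to reduce $Rf_{*}$ to $f_{*}$. Here I would invoke the notion of a Stein morphism introduced above: the fibers $\mathcal{F}_{\eta}$, together with the tube $\mathcal{N}_{\epsilon(\delta)}$, are arranged so that the coherent terms of the relative de Rham complex carry no higher direct images, which is the real analytic analogue of Cartan's Theorem B, for which I would rely on \cite{italian}. Consequently the hypercohomology spectral sequence degenerates and the direct image is computed by the honest complex of global relative forms. This is the step I expect to be the main obstacle: in the real analytic category one has neither GAGA nor properness to fall back on, so the whole argument rests on establishing the vanishing of higher coherent cohomology along the fibration and on the validity of base change, i.e. that forming the stalk at $\eta$ commutes with taking cohomology of the relative de Rham complex. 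The definition of Stein morphism is precisely the condition that must be verified for the Milnor tube.

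Once base change is in hand, I would combine the relative Poincaré lemma with Ehresmann's theorem: local triviality $\mathcal{N}_{\epsilon(\delta)} \cong U \times \mathcal{F}_{\eta}$ over a neighborhood $U \ni \eta$ identifies the stalk of the direct image with the de Rham cohomology of the fiber, and the real analytic de Rham theorem (see \cite{Kosinski} for the underlying $C^{\infty}$-statement) then yields $H^{k+n}(\mathcal{F}_{\eta}; \mathbb{R})$. The factor $\mathcal{O}_{\mathbb{D}^{*}, \eta}$ appears because the direct image is not merely a vector space but a coherent $\mathcal{D}_{\mathbb{D}^{*}}$-module, namely an integrable connection, the Gauss-Manin connection, whose flat sections form the local system built from the $\mathbb{S}_{\eta}$-action defined above; trivializing the monodromy over the contractible $U$ writes the connection as $H^{k+n}(\mathcal{F}_{\eta}; \mathbb{R}) \otimes_{\mathbb{R}} \mathcal{O}_{\mathbb{D}^{*}, \eta}$.

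Finally, to upgrade the module isomorphism to a ring isomorphism I would trace the multiplicative structures: the wedge product on relative differential forms descends to the cup product on $H^{k+n}(\mathcal{F}_{\eta}; \mathbb{R})$, while the product on $\mathcal{O}_{\mathcal{N}_{\epsilon(\delta)}}$ pushes forward to the product on $\mathcal{O}_{\mathbb{D}^{*}, \eta}$. Checking multiplicativity at the level of de Rham representatives, where it amounts to the statement that the image of a product of forms is the product of the images and that this is compatible with the connection, shows that the comparison map respects products and hence that the isomorphism of the theorem is one of rings.
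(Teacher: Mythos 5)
Your proposal follows essentially the same route as the paper's proof: represent $\int_f^k$ by the relative de Rham complex, use the Stein property of $f$ to pass from the derived pushforward to the complex of relative forms, and then apply the relative Poincar\'e lemma together with the local $C^{\infty}$-triviality of the Milnor fibration to identify the stalk at $\eta$ with $H^{k+n}(\mathcal{F}_{\eta};\mathbb{R})\otimes_{\mathbb{R}}\mathcal{O}_{\mathbb{D}^{\ast},\eta}$. You go one step further than the paper in two respects worth keeping: you flag the Stein reduction and base change as the genuinely delicate point in the real analytic category, and you verify compatibility with the multiplicative structure, which the paper's proof omits entirely even though the statement asserts a ring isomorphism.
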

\begin{proof} 
One the one hand one has by definition of $\int_f^{\cdot}$ that
$$\int_f^k \mathcal{O}_{\mathcal{N}_{\epsilon(\delta)}}=\mathbb{R}f_{\ast}^k(\mathcal{D}_{\mathcal{N}_{\epsilon(\delta)}}\leftarrow \mathbb{D}_{\epsilon}^{\ast}\otimes_{\mathcal{D}_{\mathcal{N}_{\epsilon(\delta)}}}^L \mathcal{O}_{\mathcal{N}_{\epsilon(\delta)}})$$
and on the other hand one has that $DR(\mathcal{D}_{\mathcal{N}_{\epsilon(\delta)}})$ is by the purely algebraic result \cite[Lemma 4.3.5]{Pham} a locally free resolution hence a projective resolution of the transition module $\mathcal{D}_{\mathcal{N}_{\epsilon(\delta)}\leftarrow \mathbb{D}_{\epsilon}^{\ast}}$. And since $f$ is Stein we can apply \cite[Proposition 14.3.4]{Pham} to deduce that the RHS is quasi-isomorphic to
$$\mathbb{R}^{k+n}f_{\ast}DR_{\mathcal{N}_{\epsilon(\delta)}/\mathbb{D}_{\epsilon}^{\ast}}(\mathcal{O})=\mathbb{R}^{k+n} f_{\ast}(\Omega_{\mathcal{N}_{\epsilon(\delta)}/\mathbb{D}_{\epsilon}^{\ast}}^{\cdot}).$$
Using the relative Poincaré Lemma (\cite{Kulikov}[section 3.3]) then gives 
$$\int_f^k \mathcal{O}_{\mathcal{N}_{\epsilon(\delta)}}=H^{k+n}(\mathcal{N}_{\epsilon(\delta)}/\mathbb{R}_{\epsilon}^{\ast}; \mathbb{R})$$
in the notation of \cite{Pham}. Since $f_{|\mathcal{N}_{\epsilon(\delta)}}$ is a locally trivial $C^{\infty}$-fibration this sheaf is locally constant of fiber $H^{k+n}(f_{|\mathcal{N}_{\epsilon(\delta)}^+}^{-1}(\eta); \mathbb{R})$ over $\eta\in \mathbb{D}_{\epsilon}^{\ast}$. This finishes the proof.
\end{proof}
The theorem only gives information about $H^n(\mathcal{F}_{\eta}; \mathbb{R})$ which is quite unknown. There are very few general results about singularities of maps of codimension larger than the unity. From now onwards, we shall for brevity sometimes omit the subindices and write simply $\mathcal{N}$ for the total spaces of the real Milnor fibration.
We now define the Gauss-Manin system associated to our singularity.
\begin{definition}
Let $k\in\mathbb{N}$. The $k$-th Gauss-Manin system associated to $f$ is the $\mathcal{O}_{\mathbb{D}_{\epsilon}^{\ast}}$-module $\int_f^k \mathcal{O}_{\mathcal{N}}.$
\end{definition}

Let us recall from \cite{Pham} that the systems $\int_f^k \mathcal{O}_{\mathcal{N}}$ are as pushforwards naturally endowed with a $\mathcal{D}$-module structure.

\subsection{Real Analytic Solutions}
The space of solutions to a $\mathcal{D}$-module is defined by $\mathcal{D}$-module homomorphisms into the sheaf of regular functions.

\begin{definition} The space of solutions of a $\mathcal{D}$-module $\mathfrak{d}$ over $X$ is
$Sol(\mathfrak{d})=Hom_{\mathcal{D}}(\mathfrak{d}, \mathcal{O}_X)$
\end{definition}

In the case where the $\mathcal{D}$-module is a Gauss-Manin system one constructs solutions to $\int_f^0 \mathcal{O}_{\mathcal{N}_{\epsilon(\delta)}^{+}}$ as follows. Let $U\subset\mathbb{D}_{\epsilon}^{\ast}$ be open and define
$$(\int_f^k \mathcal{O}_{\mathcal{N}_{\epsilon(\delta)}})(U) \to \mathcal{O}_{\mathbb{D}_{\epsilon}^{\ast}}(U)$$
$$c\mapsto \int_{h(\eta)} c$$
where $h(\eta)\in H^n(\mathcal{F}_{\eta}; \mathbb{R})$.
\begin{corollary}\label{corr one}
The application $h\mapsto \int_h$ is an isomorphism
$$H^n(\mathcal{F}_{\eta};\mathbb{R})\to Hom_{\mathcal{D}}(\int_f^0 \mathcal{O}_{\mathcal{N}_{\epsilon(\delta)}}, \mathcal{O}_{\mathbb{D}_{\epsilon}^{\ast}})$$
\end{corollary}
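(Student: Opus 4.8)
The plan is to deduce the corollary from Theorem \ref{main theorem} together with the general behaviour of the solution functor on integrable connections. Specialising the main theorem to $k=0$, the Gauss-Manin system $\int_f^0 \mathcal{O}_{\mathcal{N}_{\epsilon(\delta)}}$ is, as an $\mathcal{O}_{\mathbb{D}_{\epsilon}^{\ast}}$-module, locally free of finite rank equal to $\dim_{\mathbb{R}} H^n(\mathcal{F}_{\eta};\mathbb{R})$, and its $\mathcal{D}$-module structure is exactly the Gauss-Manin connection $\nabla$, which is integrable. First I would record that such an $\mathcal{O}$-coherent $\mathcal{D}$-module is a connection in the classical sense, so that $Hom_{\mathcal{D}}(\int_f^0 \mathcal{O}_{\mathcal{N}_{\epsilon(\delta)}}, \mathcal{O}_{\mathbb{D}_{\epsilon}^{\ast}})$ computes its horizontal sections: a $\mathcal{D}$-linear map into $\mathcal{O}$ is the same datum as a flat section of the dual connection $(\mathcal{E}^{\vee}, \nabla^{\vee})$. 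By the local triviality of the Milnor fibration this sheaf of flat sections is a local system whose stalk at $\eta$ is the linear dual of the fibre of the connection, and the perfect pairing on fibre cohomology (Poincaré--Lefschetz duality on $\mathcal{F}_{\eta}$) identifies that dual canonically with $H^n(\mathcal{F}_{\eta};\mathbb{R})$.

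It then remains to match this abstract duality with the explicit period map $h\mapsto \int_h$. Here I would argue that $c\mapsto \int_{h(\eta)}c$ is $\mathcal{D}$-linear precisely when $\eta\mapsto h(\eta)$ is a horizontal family for the Gauss-Manin connection, which is exactly the defining property of $\nabla$: differentiation of a period under the integral sign coincides with the action of $\partial_{\eta}$ on the cohomology class through $\nabla$. Concretely, one differentiates the period, uses the relative de Rham presentation $\mathbb{R}^{k+n}f_{\ast}DR_{\mathcal{N}_{\epsilon(\delta)}/\mathbb{D}_{\epsilon}^{\ast}}(\mathcal{O})$ obtained in the proof of Theorem \ref{main theorem}, and applies Stokes' theorem together with the flatness of $h(\eta)$ to see that the boundary contributions vanish, yielding $\partial_{\eta}\int_{h(\eta)}c=\int_{h(\eta)}\nabla_{\partial_{\eta}}c$. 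This establishes both that $h\mapsto\int_h$ lands in $Hom_{\mathcal{D}}$ and that it coincides with the pairing of the first step.

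Bijectivity is then formal: on stalks the period pairing between $H^n(\mathcal{F}_{\eta};\mathbb{R})$ and the fibre of the connection is perfect by the universal coefficient theorem and the finiteness of the cohomology of the Milnor fibre, so $h\mapsto\int_h$ is an isomorphism onto the stalk of the solution sheaf; horizontality lets one globalise this over $\mathbb{D}_{\epsilon}^{\ast}$, giving the isomorphism of the corollary. I expect the principal difficulty to be the compatibility of the period map with $\nabla$ in the second paragraph, that is, making rigorous the interchange of $\partial_{\eta}$ and $\int_{h(\eta)}$ in the real-analytic relative setting and confirming that the connection produced by $\int_f$ genuinely differentiates periods. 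Since the fibres $\mathcal{F}_{\eta}$ are manifolds with boundary, the delicate point is to represent the flat family $h(\eta)$ by cycles along which Stokes' theorem contributes no boundary terms.
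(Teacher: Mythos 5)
Your argument is essentially the paper's, expanded: the paper's entire proof is a one-line appeal to de Rham's theorem (\cite[Proposition 15.1]{Pham}), and your chain --- solutions of an $\mathcal{O}$-coherent $\mathcal{D}$-module are flat sections of the dual connection, identified with the cohomology of the Milnor fibre via the period pairing, with $\mathcal{D}$-linearity checked by differentiating under the integral sign --- is precisely the content packaged in that citation. The Stokes boundary-term issue you flag on the manifold-with-boundary fibres is a genuine point of care, but it is subsumed in the cited result rather than constituting a different route, so your proposal matches the paper's approach in substance while supplying the details the paper omits.
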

\begin{proof}
Use de Rham's theorem, see \cite[Proposition 15.1]{Pham}.
\end{proof}

In other words, one can identify the highest degree cohomology group of the Milnor fiber with the real analytic solutions of the Gauss-Manin system associated to the singularity.

\begin{remark}
In the holomorphic case one can replace the word 'solution' above with 'horisontal solution'. It is not clear for us whether or not this might be done in the real case.
\end{remark}

\section{Polynomial Dynamical Systems in the Plane}
\subsection{Hilbert's 16th Problem}
Given the system of differential equations
$$(\dot{x}_1(t), \dot{x}_2(t))=(P(x_1(t), x_2(t)), Q(x_1(t), x_2(t)))\qquad(\ast)$$
with $P, Q\in \mathbb{R}[x_1, x_2]$ real polynomials recall that a \emph{limit cycle} is a periodic solution of $(\ast)$ which is the limit of at least one other solution. Hilbert's 16th problem then asks for how many limit cycles a system $(\ast)$ can have. 
\subsection{Localising a dynamical system}
Let us write $V(x_1, x_2)=(P(x_1, x_2), Q(x_1, x_2))$. Recall that each periodic solution of a dynamical system contains in it's interior a stationary point. Let $Crit(V)$ denote the set of stationary points of $V$ and let us assume that there are only finitely many stationary points; 
$$Crit(V)=\{p_1,\dots, p_k\}$$
At each point $p_i$ let $f_{i}=0,$ define the germs of solutions of $(\ast)$. In detail one considers a sufficiently small ball $\mathbb{B}_{\delta_i}\ni p_i$ and one takes a local equation for the system of differential equations 
$$rank \nabla V(x_1, x_2)\leq 1,\qquad (x_1, x_2)\in \mathbb{B}_{\delta_i}$$ 
or equivalently
$$\frac{\partial P}{\partial x_1}\frac{\partial Q}{\partial x_2}-\frac{\partial P}{\partial x_2}\frac{\partial Q}{\partial x_1}=0,\qquad (x_1, x_2)\in\mathbb{B}_{\delta_i}$$
which one integrates giving the local equation $f_{i}$. Then $f_{i}: (\mathbb{R}^2, p_i)\to (\mathbb{R}^2, f_{i}(p_i)$ defines $(n_1+\dots+n_k)$ singularities. Let
$$\mathfrak{g}_{i}=\int_{f_{i}}^0 \mathcal{O}_{\mathcal{N}(\epsilon_i(\delta_i))}$$
denote the corresponding Gauss-Manin systems where 
$$f_{i}: \mathcal{N}_{\epsilon_i(\delta_i)}\to \mathbb{D}_{\epsilon_i}^{\ast}$$
are the Milnor fibrations of $f_{i}$ with fibers $\mathcal{F}_{\eta_i}^{f_{i}}$ over $\eta_i\in\mathbb{D}_{\epsilon_i}^{\ast}$. By the \hyperref[main theorem]{Theorem \ref*{main theorem} } there is an isomorphism of rings 
$$(\mathfrak{g}_{i})_{f_{i}(p_i)}\to H^n(\mathcal{F}_{\eta_i}^{f_{i}}; \mathbb{R})$$
and there is a basis of vanishing cycles
$$[\gamma_{il}(t)]\in H^n(\mathcal{F}_{\eta_i}^{f_{i}}; \mathbb{R})$$
\subsection{The Solution}
The dynamical system $(\ast)$ is a $\mathbb{D}$-module $\mathfrak{f}=d/dt-V$. Its real analytic solutions $Sol(\mathfrak{f})$ therefore forms a real vector space \cite{coutinho}. Its set of periodic solutions forms a real sub-vectorspace $PSol(\mathfrak{f})$. The same is true if we localise as above and consider local solutions $Sol_i(\mathfrak{f})$ and local periodic solutions $PSol_i(\mathfrak{f})$ in the local Milnor ball $\mathbb{B}_{\delta_i}$.
In the local Milnor ball $\mathbb{B}_{\delta_i}$ we notice that $V(x(t))=dx/dt$ is equivalent to
$$V(x(t))=\eta_i(t),\qquad dx/dt=\eta_i(t)\qquad(\ast\ast)$$
Consider
$$V(x(t))=\eta_i(t)\qquad dx/dt=\eta_i(t),\qquad \lVert \eta_i(t)\rVert =\eta_i \qquad (\ast\ast\ast)$$
where we consider $\eta_i(t)$ as a path $\eta_i: \mathbb{R}\to\mathbb{R}^2$. If $x(t)$ is a periodic solution to $(\ast\ast)$ it describes a circle which we can parametrise such that the speed is constant. But then $x(t)$ satisfies $(\ast\ast\ast)$. 
In particular every element in $PSol_i(\mathfrak{f})$ is a periodic path $x(t)$ in the Milnor fibre $\mathcal{F}_{\eta_i}^{f_i}$. Taking homology these paths are spanned $[x(t)]=\sum a_i [\gamma_l(t)]$ by the vanishing cycles and forms the vector space $H_n(\mathbb{F}_{\eta_i}^{f_i}; \mathbb{R})$. Sending $x(t)\mapsto [x(t)]$ therefore gives an injection
$$PSol_i(\mathfrak{f})\to H_n(\mathcal{F}_{\eta_i}^{f_i}; \mathbb{R})$$
and hence into the solutions $Sol(\mathfrak{g}_i)$ of the local Gauss-Manin system. Write $l_{i}=rank H^n(\mathcal{F}_{\eta_i}^{f_{i}}; \mathbb{R})$.

\begin{theorem} There are at most  $\sum_{i=1}^k l_{i}$ limit cycles of $(\ast)$ in homology. If $\pi\circ f_i, i=1,\dots, k$ with $\pi: \mathbb{R}^2\to\mathbb{R}$ a standard projection is a submersion restricted to the total space of the Milnor fibration of $f_i$ then equality holds.
\end{theorem}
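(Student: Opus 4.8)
The plan is to prove the two assertions separately: first the upper bound, which holds unconditionally, and then its sharpness under the submersion hypothesis.

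First I would establish the upper bound. Since every limit cycle is in particular a periodic solution of $(\ast)$, it encloses in its interior at least one stationary point $p_i\in Crit(V)$, as recalled above. Fixing such a $p_i$ and passing to the local Milnor ball $\mathbb{B}_{\delta_i}$, the equivalence of $(\ast\ast)$ and $(\ast\ast\ast)$ lets me reparametrise the orbit at constant speed, so that it becomes a closed path lying in a single Milnor fibre $\mathcal{F}^{f_i}_{\eta_i}$. Sending this orbit to its class $[x(t)]\in H_n(\mathcal{F}^{f_i}_{\eta_i};\mathbb{R})$ and invoking the injection $PSol_i(\mathfrak{f})\hookrightarrow H_n(\mathcal{F}^{f_i}_{\eta_i};\mathbb{R})$ constructed above, the span of the localised limit-cycle classes at $p_i$ has dimension at most $\dim_{\mathbb{R}}H_n(\mathcal{F}^{f_i}_{\eta_i};\mathbb{R})$. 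By the universal coefficient theorem this equals $\dim_{\mathbb{R}}H^n(\mathcal{F}^{f_i}_{\eta_i};\mathbb{R})=l_i$. Since there are only finitely many stationary points and each limit cycle is accounted for at (at least) one of them, summing over $i$ yields at most $\sum_{i=1}^k l_i$ limit cycles in homology.

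For the equality statement I would use the submersion hypothesis to upgrade the injection $PSol_i(\mathfrak{f})\hookrightarrow H_n(\mathcal{F}^{f_i}_{\eta_i};\mathbb{R})$ to a surjection, hence an isomorphism. If $\pi\circ f_i$ is a submersion on the total space of the Milnor fibration, its fibres are smooth and transverse to the projection, and the vanishing cycles $[\gamma_{il}(t)]$ may then be represented by embedded circles along which $(\ast\ast\ast)$ can be solved; each generator of the vanishing homology thus carries a genuine constant-speed closed orbit of the field $V$. Combining Theorem \ref*{main theorem}, which identifies the stalk of the Gauss--Manin system with $H^n(\mathcal{F}^{f_i}_{\eta_i};\mathbb{R})$, with Corollary \ref*{corr one}, which identifies the latter with the solution space of $\mathfrak{g}_i$, then shows that every class in the vanishing homology is realised, so that the localised count at $p_i$ is exactly $l_i$ and the total is $\sum_{i=1}^k l_i$.

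The hard part will be the surjectivity in the equality case, namely producing from each abstract vanishing cycle a genuine \emph{limit} cycle rather than merely a periodic or non-isolated orbit. The subtlety is that a nonzero homology class need not a priori be representable by an isolated periodic solution of the flow; what the submersion hypothesis must rule out are the degenerations, such as continuous families of non-isolated periodic orbits or cycles bounding a disc in the fibre and hence failing to be limit cycles. Controlling precisely how the trajectories of $V$ sit inside the Milnor fibration, and verifying that transversality of $\pi\circ f_i$ forces each generator to be an isolated orbit, is where the main analytic work lies; the remaining bookkeeping over the finitely many $p_i$ is routine.
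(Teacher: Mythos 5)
Your upper bound argument is essentially the paper's: localise each limit cycle at a stationary point $p_i$, reparametrise at constant speed so that it lies in a single Milnor fibre, pass to homology, and count dimensions via Corollary \ref*{corr one} and Theorem \ref*{main theorem}. That half is complete and matches the paper's route (the paper phrases the injection through the periodic subset $\mathcal{L}\subset Sol(\mathfrak{g}_i)$ of the solution space rather than through $H_n$ and universal coefficients, but the content is the same).

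For the equality, however, you have correctly identified the difficulty and then stopped exactly where the paper's proof begins to do work: you never produce, from a vanishing cycle $\gamma$, a \emph{second} trajectory that tends to it, which is what makes $\gamma$ a limit cycle rather than merely a periodic orbit. The paper's construction is the missing ingredient. It uses the submersion hypothesis to get a second locally trivial fibration $\mathcal{N}_{\epsilon_i(\delta_i)}\to\mathbb{R}^+$ through $\pi\circ f_i$, pushes the cycle down to the base as $\rho(t)=f\circ\gamma(t)$ (a circle of radius $\eta_i$), and then forms the set $P=\{t\cup\rho(t),\ t^2\in(0,\eta_i]\}$ in the base, i.e.\ a path sweeping through all the radii up to $\eta_i$. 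Lifting $P$ through the locally trivial fibration yields a connected component of $\Gamma=\{\gamma(t): f\circ\gamma(t)\in P\}$ diffeomorphic to an interval; this curve $\tilde\gamma$ crosses the fibres $\mathcal{F}_{\eta}$ for $\eta<\eta_i$ and accumulates on $\gamma$ as $t\to\pm\eta$, and after smoothing the corners it is the required approaching trajectory. Your remark that the submersion hypothesis ``must rule out'' continuous families of periodic orbits or nullhomotopic cycles gestures at the right issue, but without an explicit construction of an orbit spiralling into each generator the surjectivity claim --- and hence the equality --- is unproved in your version. (Whether the paper's own lift $\tilde\gamma$ is genuinely an integral curve of $V$, rather than just a curve transverse to the fibres, is itself a point you would need to scrutinise if you adopt that construction; but some argument of this shape is indispensable and your proposal contains none.)
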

\begin{proof} Each limit point is a periodic solution hence corresponds to a stationary point $p_i$, $i=1,\dots, k$. Each stationary point corresponds to a germ of singularity $f_{i}$ to which we have associated a $\mathcal{D}$-module $(\mathfrak{g}_{i})_{\eta_i}$. There is a bijection between the set of solutions to this $\mathcal{D}$-module and the vanishing cycles by \hyperref[corr one]{Corollary \ref*{corr one} }. Each vanishing cycle is equipped with an action $t\mapsto [\gamma(t)], t\in \mathbb{S}_{\eta_i}\subset\mathbb{R}^2$ which has a certain period $\alpha$. On the other hand $Sol(\mathfrak{g}_{i})$ is equipped with an action
$t\mapsto \int_{\gamma(t)}$
Let $\mathcal{L}\subset Sol(\mathfrak{g}_{i})$ denote the set of periodic solutions i.e images of the isomorphism which are periodic under the above action. Then the isomorphism of \hyperref[corr one]{Corollary \ref*{corr one} }, call it $\phi$, gives $\mathcal{L}\cong \phi^{-1}(\mathcal{L})\subset H^{n}(\mathcal{F}_{\eta_i}^{f_{i}}; \mathbb{R})$ whence the upper bound. For the lower bound we need to show that there exists for each vanishing cycle a trajectory tending to it from the inside or outside. By our assumptions the Milnor fibration
$$\mathcal{N}_{\epsilon_i(\delta_i)}\to^{f_i} \mathbb{D}_{\epsilon_i}\to^{(x_1, x_2)\to x_1^2+x_2^2} \mathbb{R}^{+}$$
contains as a restriction the $C^\infty$-fibration
$$\mathcal{N}_{\epsilon_i(\delta_i)}\to^{\pi\circ f_i} \mathbb{D}_{\epsilon_i}\to^{(x_1, x_2)\mapsto x_1^2+x_2^2} \mathbb{R}^{+}.$$
Let the projection be the projection to the first coordinate. 
Let $\gamma\in \mathcal{F}_{\eta_i}^{f_i}$ be a vanishing cycle and let $\rho(t)=f\circ \gamma(t).$ We can assume $\rho: t\mapsto \eta_i e^{2\pi i t/\eta_i}$. In particular $\rho$ is defined and analytic in the disc $\mathbb{D}_{\eta_i}$. 
Let $P= \{t\cup \rho(t), t^2\in (0,\eta_i]\}$. Then, by triviality of the Milnor fibration and by our hypothesis there exists a connected component of $\Gamma=\{\gamma(t): f\circ \gamma(t)\in P\}$. Again by triviality this component is diffeomorphic to an interval and hence is a curve $\tilde{\gamma}(t)$. This curve is by \hyperref[corr one]{Corollary \ref*{corr one} } for each $t$ a solution hence gives a by smoothening the corners if necessary a trajectory tending to $\gamma$ as $t\to \pm \eta $ which proves the equality.
\end{proof}

\begin{remark}
    The ideas in the proof works more generally for real analytic vector fields on $\mathbb{R}^n$
 and dynamical systems $\dot{x}(t)=V(x(t))$. Then there is for $k>1$ a bijective correspondence between vanishing cycles of a certain system of germs of singularities and the limit cycles of the dynamical system. In case $k=1$ one could speak of positive and negative limit cycles, corresponding to positive and negative vanishing cycles. For instance $\dot{x}(t)=x-x^3$ has at $x^2=1$ two positive limit cycles and no negative limit cycles and at $x=0$ a positive limit cycle and a negative limit cycle. There is a diffeomorphism $x\mapsto -x$ between the "limit cycles" at $x=0.$
\end{remark}
\begin{remark}
    One can associate to each real analytic singularity $f: (\mathbb{R}^n, 0)\to (\mathbb{R}^k, 0)$
the conservative dynamical system $\dot{x}(t)=\nabla f(x(t))$. The vanishing cycles of $f$ corresponds to the periodic orbits of the dynamical system. This is in essence what Newton does in Proposition XVI Theorem VIII of the Principia \cite{newton} but he of course reduced it to a problem about Euclidean geometry\footnote{which strictly speaking concerns non-linear planes and curves, it's just that these are studied essentially through linearisation which in a more refined form Newton of course created by adding to Euclids axioms a Lemma, which in is really both an axiom and a definition of the infinitesimal} just as he reduced physics to geometry. 
\end{remark}
\begin{definition}
    A morsification of a polynomial dynamical system $\dot{x}(t)=V(x(t))$ is a dynamical system $\dot{x}(t)=V_s(x(t))$ such that $V_s(x)=\tilde{V}(x, s)$ is polynomial, $V_0(x)=V(x)$ and such that for an open dense subset of values of the parameter $s$ if $V_s(p)=0$ the vector field $V_s$ has only nondegenerate isolated critical points. 
\end{definition}
\begin{remark}
    Since the vanishing cycles of the local singularities $f_{i,s}$ associated to the morsification of a dynamical system $\dot{x}(t)=V(x(t))$ are the same as the vanishing cycles of $f_i$, the theorem above says that under its (very restrictive) second assumption morsification leaves limit cycles invariant.
\end{remark}

\bibliographystyle{plain}
\bibliography{main.bib}

\begin{thebibliography}{1}

\bibitem{coutinho}
S.~C. Coutinho.
\newblock {\em A Primer of Algebraic D-Modules}.
\newblock London Mathematical Society Student Texts. Cambridge University Press, 1995.

\bibitem{italian}
{P}atrizia~{M}acr{\`i} Francesco~{Guaraldo} and {A}llessandro {T}ancredi.
\newblock {\em Topics on {R}eal {A}nalytic {S}paces}.
\newblock Advanced {L}ectures in {M}athematics. Vieweg \& {T}eubner {V}erlag, 1986.

\bibitem{Kosinski}
Antoni~A. Kosinski.
\newblock {\em Differential {M}anifolds}.
\newblock Academic Press, Inc. Boston, MA, 1993.

\bibitem{Man58}
Yu.{I}. {M}anin.
\newblock Algebraic curves over fields with differentiation.
\newblock {\em Izv. {A}kad. {N}auk {S}{S}{S}{R} {S}er. {M}at.}, 22(6):737--756, 1958.

\bibitem{Milnor}
John Milnor.
\newblock {\em Singular {P}oints of {C}omplex {H}ypersurfaces.}
\newblock Princeton University Press, Princeton, N.J, 1968.

\bibitem{newton}
{I}saac Newton.
\newblock {\em Philosophiae {N}aturalis {P}rincipia {M}athematica}.
\newblock Societas Regiae, Londini, 1687.

\bibitem{Pham}
{F}rédéric Pham.
\newblock {\em {S}ingularités des {S}ystèmes {D}ifferentiels de {G}auss-Manin}, volume~2 of {\em Progress in {M}athematics}.
\newblock Springer, 1976.

\bibitem{Kulikov}
{V}alentine {S}.~Kulikov.
\newblock Mixed {H}odge {S}tructures and {S}ingularities.
\newblock {\em Cambridge {T}racts in {M}athematics}, 132, 1998.

\end{thebibliography}

\end{document}